\documentclass[a4paper,12pt]{article}
\usepackage{amsfonts}
\usepackage{amssymb}
\usepackage{latexsym}
\usepackage{amsmath}
\usepackage{amsthm}

\setlength{\bigskipamount}{5ex plus1.5ex minus 2ex}
\setlength{\textheight}{23cm} \setlength{\textwidth}{16cm}
\setlength{\hoffset}{-1.3cm} \setlength{\voffset}{-1.8cm}

\newtheorem{theorem}{Theorem}
\newtheorem{lemma}[theorem]{Lemma}

\newtheorem{proposition}[theorem]{Proposition}
\newtheorem{example}{Example}
\newtheorem{definition}{Definition}
\newtheorem{remark}{Remark}

\newcommand{\NN}{\mathbb{N}}
\newcommand{\RR}{\mathbb{R}}
\newcommand{\bsgamma}{\boldsymbol{\gamma}}
\newcommand{\bsalpha}{\boldsymbol{\alpha}}
\newcommand{\bsx}{\boldsymbol{x}}
\newcommand{\bsone}{\boldsymbol{1}}
\newcommand{\bszero}{\boldsymbol{0}}
\newcommand{\uu}{\mathfrak{u}}
\newcommand{\cP}{\mathcal{P}}

\newcommand{\cF}{\mathcal{F}}

\title{Tractability properties of the weighted star discrepancy of regular grids}
\author{Friedrich Pillichshammer\thanks{F. Pillichshammer is supported by the 
Austrian Science Fund (FWF) Project F5509-N26, which is a part of the Special Research Program "Quasi-Monte Carlo Methods:
Theory and Applications". The support of the  Erwin Schr\"odinger International Institute for Mathematics and Physics (ESI) under the thematic programme ``Tractability of High Dimensional Problems and Discrepancy'' is gratefully acknowledged.}}

\date{}

\begin{document}

\maketitle

\begin{abstract}
In this paper we study tractability properties of the weighted star discrepancy with general coefficients of centered regular grids with different mesh-sizes. We give exact characterizations of the weight sequences $(\gamma_j)_{j \ge 1}$ such that the regular grid with different mesh-sizes achieves weak, uniform weak, quasi polynomial, polynomial or strong polynomial tractability for the $\bsgamma$-weighted star discrepancy. For example, a necessary and sufficient condition such that the regular grid with different mesh-sizes achieves weak tractability for the $\bsgamma$-weighted star discrepancy is $\lim_{j \rightarrow \infty}j \gamma_j=0$.
\end{abstract}

\centerline{\begin{minipage}[hc]{130mm}{
{\em Keywords:} weighted star discrepancy, tractability, regular grid, quasi-Monte Carlo\\
{\em MSC 2010:} 11K38, 11K45, 65C05}
\end{minipage}}

\section{Introduction}

In this short paper we study the weighted star discrepancy of the {\it centered regular grid} with different mesh-sizes $m_1,\ldots,m_d$ in dimension $d$, given by  $$\Gamma_{m_1,\ldots,m_d} =\left\{\left( \frac{2 \ell_1 +1}{2 m_1},\ldots,\frac{2 \ell_d +1}{2 m_d}\right) \ : \ \ell_j \in \{0,1,\ldots,m_j-1\} \mbox{ for $j=1,\ldots,d$}\right\},$$ where $m_1,m_2,\ldots,m_d \in \NN$ are the mesh-sizes for the single coordinate directions.

It may make wonder why the discrepancy of regular grids is worth to be studied since it is well known that the (classical, i.e., unweighted) star discrepancy of regular grids is far from being of optimal order (see, e.g. \cite[Remark~2.20]{LP14}). However, in several recent papers it turned out, that the use of regular grids with different mesh-sizes in the context of numerical integration and approximation in weighted spaces of infinite smooth functions leads to optimal results (e.g., exponential convergence rates or various notions of tractability); see \cite{DKPW,DLPW,KPW}. These findings motivate the study of the weighted star discrepancy of regular grids. Clearly, in the unweighted case the star discrepancy of regular grids (even with different mesh-sizes) suffers from the curse of dimensionality. But if the weights decay fast enough the question arises if it is possible to peter out the mesh-sizes of the single coordinates fast enough in order to achieve some notion of tractabilty? In the extremal (trivial) case of a weight sequence $(\gamma_j)_{j \ge 1}$ which becomes eventually zero this is certainly possible and it is clear that in this case the weighted star discrepancy of regular grids does not depend on the dimension $d$ - no wonder! Such a behavior is called ``strong polynomial tractability''. 

It is the aim of this short paper to classify the decay of weight sequences $(\gamma_j)_{j \ge 1}$ such that certain notions of tractability for the weighted star discrepancy of regular grids with different mesh-sizes are guaranteed. We point out that we do not aim at finding point sets with ``small'' weighted star discrepancy that can achieve tractability under very mild conditions on the weights (some references in this direction will be given at the end of this paper). We are aware that regular grids are by far not the best choices for such a purpose. Rather, we study how good regular grids with different mesh-sizes perform in the context of tractability of weighted star discrepancy. 

The results will be presented in Section~\ref{sec:res}. In the next section we present some basics about the weighted star discrepancy of general point sets and of centered regular grids and we give the definitions of various notions of tractability. We consider the weighted star discrepancy with general coefficients which is slightly more general than the usual notion with equal coefficients that sum up to one. Furthermore, we remark that the centering of the grids under consideration is of no importance. The same results can be shown for not centered regular grids of the form $$\left\{\left( \frac{\ell_1}{m_1},\ldots,\frac{\ell_d}{m_d}\right) \ : \ \ell_j \in \{0,1,\ldots,m_j-1\} \mbox{ for $j=1,\ldots,d$}\right\}.$$ 

\section{Weighted star discrepancy and tractability}

Let $[d]=\{1,2,\ldots,d\}$ and let $$\bsgamma=\{\gamma_{\uu}\ : \ \emptyset \not=\uu \subseteq [d]\} \subseteq [0,1]$$ be a given set of weights.
The $\bsgamma$-weighted star discrepancy of an $N$-point set $\cP_d$ in $[0,1)^d$, introduced by Sloan and Wo\'{z}niakowski \cite{SW98}, is intimately linked to the worst-case integration error of quasi-Monte Carlo (QMC) rules of the form $$\frac{1}{N} \sum_{\bsx \in \cP_d} f(\bsx)$$ for functions $f$ from the weighted function class $\mathcal{F}_{d,1,\bsgamma}$, which is given as follows: Let $\mathcal{W}_1^{(1,1,\ldots,1)}([0,1]^d)$ be the Sobolev space of functions defined on $[0,1]^d$ that are once differentiable in each variable, and whose derivatives have finite $L_1$ norm. Then $$\cF_{d,1,\bsgamma}=\{f \in \mathcal{W}_1^{(1,1,\ldots,1)}([0,1]^d) \ : \ \|f\|_{d,1,\bsgamma}< \infty\},$$ where $$\|f\|_{d,1,\bsgamma} =|f(\bsone)| + \sum_{\emptyset \not=\uu \subseteq [d]} \frac{1}{\gamma_\uu}\left\|\frac{\partial^{|\uu|}}{\partial \bsx_{\uu}}f(\bsx_{\uu},\bsone)\right\|_{L_1}.$$ The fundamental error estimate is a weighted version of the Koksma-Hlawka inequality, see \cite[p.~65]{NW10}. In fact, the worst-case error of a QMC rule in $\cF_{d,1,\bsgamma}$ is exactly the $\bsgamma$-weighted star discrepancy of the point set used in the QMC rule. 

Here we consider more general linear algorithms of the form 
\begin{equation}\label{linrul}
\sum_{\bsx \in \cP_d} a_{\bsx} f(\bsx)
\end{equation}
 with arbitrary coefficients $a_{\bsx} \in \RR$ for $\bsx \in \cP_d$. Then the {\it local discrepancy} of an $N$-point set ${\cal P}_d$ in $[0,1)^d$ with coefficients ${\cal A}(\cP_d)=\{a_{\bsx} \ : \ \bsx \in \cP_d\}$ 
is defined as $$\Delta_{\cP_d,{\cal A}(\cP_d)}(\bsalpha):=\sum_{\bsx \in \cP_d} a_{\bsx} \bsone_{[\bszero,\bsalpha)}(\bsx)-{\rm Volume}([\bszero,\bsalpha))$$
for all $\bsalpha=(\alpha_1,\ldots,\alpha_d) \in [0,1]^d$, where $[\bszero,\bsalpha)=[0,\alpha_1)\times [0,\alpha_2)\times \ldots \times [0,\alpha_d)$ and $\bsone_{[\bszero,\bsalpha)}$ is the characteristic function of this interval (see \cite[Eq. (9.2)]{NW10}, where also this general notion of discrepancy is considered). If $a_{\bsx}=1/N$ for all $\bsx \in \cP_d$ we speak about QMC coefficients; in this case we simply write $\Delta_{{\cal P}_d}$ for the local discrepancy.


\begin{definition}[Weighted star discrepancy]\rm
For an $N$-point set $\cP_d$ in $[0,1)^d$ with coefficients ${\cal A}(\cP_d)$
the {\em $\bsgamma$-weighted star discrepancy} is defined as
$$D_{N,{\bsgamma}}^*({\cal P}_d,{\cal A}(\cP_d)):=\sup_{\bsalpha\in [0,1]^d}
\max_{\emptyset\ne {\mathfrak u}\subseteq [d]} \gamma_{\mathfrak
  u}|\Delta_{\cP_d,{\cal A}(\cP_d)}((\bsalpha_{\mathfrak u},\bsone))|,$$
where for $\bsalpha=(\alpha_1,\ldots,\alpha_d) \in [0,1]^d$ and for
$\uu \subseteq [d]$ we put $(\bsalpha_{\uu},\bsone)=(y_1,\ldots,y_d)$
with $y_j=\alpha_j$ if $j \in \uu$ and $y_j=1$ if $j \not\in \uu$. For QMC coefficients $a_{\bsx}=1/N$ we simply write $D_{N,\bsgamma}^*(\cP_d)$.
\end{definition}

\begin{remark}\rm
\begin{itemize}
\item The weights $\gamma_{\uu}$ model the importance of the groups of coordinate projections with indices in $\uu$ for the integration problem. See \cite{SW98} and \cite[Chapter~9]{NW10} for further information. The $\bsgamma$-weighted star discrepancy $D_{N,{\bsgamma}}^*({\cal P}_d,{\cal A}(\cP_d))$ is exactly the worst-case error of the linear algorithm \eqref{linrul} in $\cF_{d,1,\bsgamma}$.
\item A popular choice for the weights are {\it product weights} given by a sequence $(\gamma_j)_{j \ge 1}$ of positive reals. Then for $\emptyset \not=\uu \subseteq [d]$ one defines 
\begin{equation}\label{def:prodweight}
\gamma_{\uu}=\prod_{j\in \uu} \gamma_j.
\end{equation}
\item If $\gamma_{[d]}=1$ and $\gamma_{\uu}=0$ for all $\uu \varsubsetneqq [d]$, or likewise, in the case of product weights, if $\gamma_j=1$ for all $j \in \NN$ we obtain the classical, i.e., unweighted star discrepancy $D_N^*(\cP_d,{\cal A}(\cP_d))$ which we simply call star discrepancy. Again, for QMC coefficients we write $D_N^*(\cP_d)$.
\end{itemize}
\end{remark}

\begin{lemma}
Let $\cP_d$ be an $N$-point set in $[0,1)^d$ with QMC coefficients $a_{\bsx}=1/N$. For $\emptyset \not= \uu \subseteq [d]$ let $\cP_d(\uu)$ be the $|\uu|$-dimensional point set consisting of the projection of the points in $\cP_d$ to the components given in $\uu$. Then 
\begin{equation}\label{fo:wdisc1}
D_{N,{\bsgamma}}^*({\cal P}_d) = \max_{\emptyset\ne {\mathfrak u}\subseteq [d]} \gamma_{\uu} D_N^{\ast}(\cP_d(\uu)).
\end{equation}
\end{lemma}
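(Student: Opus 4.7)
The plan is to unpack the definition of $D_{N,\bsgamma}^*(\cP_d)$ and show that for each fixed nonempty $\uu \subseteq [d]$, the inner supremum over $\bsalpha \in [0,1]^d$ of $|\Delta_{\cP_d}((\bsalpha_\uu,\bsone))|$ coincides with the unweighted $|\uu|$-dimensional star discrepancy $D_N^\ast(\cP_d(\uu))$. Once this identification is made, exchanging the outer supremum over $\bsalpha$ with the maximum over $\uu$ (which is legitimate because, for QMC coefficients, the expression inside depends on $\bsalpha$ only through $\bsalpha_\uu$) yields the claimed formula.

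The key observation is that for the anchored box $[\bszero,(\bsalpha_\uu,\bsone))$, the coordinates indexed by $j \notin \uu$ impose the constraint $x_j \in [0,1)$, which is automatically satisfied by every $\bsx \in \cP_d \subseteq [0,1)^d$. Hence
\[
\sum_{\bsx \in \cP_d} \frac{1}{N}\,\bsone_{[\bszero,(\bsalpha_\uu,\bsone))}(\bsx) \;=\; \frac{1}{N}\,\#\{\bsx \in \cP_d \,:\, \bsx_\uu \in [\bszero_\uu,\bsalpha_\uu)\} \;=\; \frac{1}{N}\,\#\{\bsy \in \cP_d(\uu) \,:\, \bsy \in [\bszero_\uu,\bsalpha_\uu)\}.
\]
At the same time, $\mathrm{Volume}([\bszero,(\bsalpha_\uu,\bsone))) = \prod_{j \in \uu}\alpha_j = \mathrm{Volume}([\bszero_\uu,\bsalpha_\uu))$. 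Together these give $\Delta_{\cP_d}((\bsalpha_\uu,\bsone)) = \Delta_{\cP_d(\uu)}(\bsalpha_\uu)$, which is the heart of the argument.

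With this identity in hand, I would rewrite
\[
D_{N,\bsgamma}^*(\cP_d) \;=\; \sup_{\bsalpha \in [0,1]^d}\,\max_{\emptyset \neq \uu \subseteq [d]} \gamma_\uu\,|\Delta_{\cP_d(\uu)}(\bsalpha_\uu)| \;=\; \max_{\emptyset \neq \uu \subseteq [d]} \gamma_\uu \sup_{\bsalpha \in [0,1]^d}|\Delta_{\cP_d(\uu)}(\bsalpha_\uu)|,
\]
where the interchange of $\sup$ and $\max$ is justified since the set indexed by $\uu$ is finite and, for each term, only the $\uu$-components of $\bsalpha$ matter, so the supremum over the remaining components is vacuous. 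Finally, as $\bsalpha$ ranges over $[0,1]^d$, the projection $\bsalpha_\uu$ ranges over all of $[0,1]^\uu$, so the inner supremum equals $D_N^\ast(\cP_d(\uu))$ by definition of the unweighted star discrepancy in dimension $|\uu|$.

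I expect no substantial obstacle; the only thing that needs care is the bookkeeping that separates genuine constraints on coordinates $j \in \uu$ from the trivially satisfied $x_j < 1$ constraints for $j \notin \uu$, and the verification that QMC coefficients are needed in precisely this step (the sum of $1/N$ over the matching points equals the empirical measure of the projected box in $\cP_d(\uu)$, which would not have such a clean form for general $a_\bsx$).
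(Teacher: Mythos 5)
Your proof is correct and follows essentially the same route as the paper's: both arguments rest on the observation that for QMC coefficients the local discrepancy of the anchored box $[\bszero,(\bsalpha_{\uu},\bsone))$ reduces to the local discrepancy of the projected point set $\cP_d(\uu)$ at $\bsalpha_{\uu}$, the paper packaging this as two inequalities where you phrase it as an identity followed by a (always valid) interchange of iterated suprema. Your write-up is somewhat more explicit about why the coordinates outside $\uu$ drop out, which the paper dismisses as obvious, but there is no substantive difference.
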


\begin{proof}
Obviously, for any $\uu \subseteq [d]$ and any $\bsalpha \in [0,1]^d$ we have $|\Delta_{\cP_d}((\bsalpha_{\mathfrak u},\bsone))| \le D_N^{\ast}(\cP_d(\uu))$ and hence $$D_{N,{\bsgamma}}^*({\cal P}_d) \le \max_{\emptyset\ne {\mathfrak u}\subseteq [d]} \gamma_{\uu} D_N^{\ast}(\cP_d(\uu)).$$ On the other hand, for every $\emptyset\ne {\mathfrak u}\subseteq [d]$ we have $$D_{N,{\bsgamma}}^*({\cal P}_d) \ge \sup_{\bsalpha\in [0,1]^d} \gamma_{\uu} |\Delta_{\cP_d}((\bsalpha_{\mathfrak u},\bsone))| = \gamma_{\uu} D_N^{\ast}(\cP_d(\uu))$$ and hence \eqref{fo:wdisc1} is shown.
\end{proof}

The next proposition gives formulas and estimates for the weighted star discrepancy of regular grids $\Gamma_{m_1,\ldots,m_d}$. Let $N=|\Gamma_{m_1,\ldots,m_d}|=m_1\cdots m_d$.

\begin{proposition}
For the weighted star discrepancy of the centered regular grid $\Gamma_{m_1,\ldots,m_d}$ with arbitrary coefficients $\mathcal{A}(\Gamma_{m_1,\ldots,m_d})$ and product weights \eqref{def:prodweight} for every $\ell \in [d]$ we have 
\begin{equation}\label{lbd_belw}
D_{N,\bsgamma}^*(\Gamma_{m_1,\ldots,m_d},{\cal A}(\Gamma_{m_1,\ldots,m_d})) \ge \frac{\gamma_{\ell}}{4 m_\ell}.
\end{equation}
For the star discrepancy of the centered regular grid $\Gamma_{m_1,\ldots,m_d}$ with QMC coefficients we have 
\begin{equation}\label{fo:disc}
D_N^*(\Gamma_{m_1,\ldots,m_d})=1-\prod_{j=1}^d\left(1-\frac{1}{2 m_j}\right).
\end{equation}
For the weighted star discrepancy of the centered regular grid $\Gamma_{m_1,\ldots,m_d}$ with QMC coefficients and weights $\bsgamma$ we have 
\begin{align}\label{fo:wdisclat}
D_{N,\bsgamma}^*(\Gamma_{m_1,\ldots,m_d}) 
&= \max_{\emptyset \not=\uu \subseteq [d]}  \gamma_{\uu} \left(1-\prod_{j \in \uu}\left(1-\frac{1}{2 m_j}\right)\right),
\end{align}
and 
\begin{align}\label{bd:wdisclat}
D_{N,\bsgamma}^*(\Gamma_{m_1,\ldots,m_d}) \le \max_{\emptyset \not=\uu \subseteq [d]}  \gamma_{\uu} \sum_{j \in \uu} \frac{1}{2 m_j}. 
\end{align}
\end{proposition}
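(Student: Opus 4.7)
My plan is to dispatch the four claims in the order they appear, bundling the two weighted statements at the end. For the lower bound \eqref{lbd_belw}, which needs no assumption on the coefficient vector, I would test at $\uu=\{\ell\}$ with $\alpha_\ell=1/(2m_\ell)$. Every $\bsx\in\Gamma_{m_1,\ldots,m_d}$ has $x_\ell\ge 1/(2m_\ell)$, so the indicator $\bsone_{[\bszero,(\bsalpha_\uu,\bsone))}$ vanishes on the grid, independently of $\mathcal{A}$. Hence the local discrepancy equals $0-\alpha_\ell=-1/(2m_\ell)$, and multiplying by $\gamma_\uu=\gamma_\ell$ yields $\gamma_\ell/(2m_\ell)\ge\gamma_\ell/(4m_\ell)$.

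For the star discrepancy \eqref{fo:disc} with QMC coefficients I would use the tensor structure: the counting function factorises and
$$\Delta(\bsalpha)=\prod_{j=1}^d\phi_j(\alpha_j)-\prod_{j=1}^d\alpha_j,\qquad\phi_j(\alpha_j):=\frac{1}{m_j}\left|\left\{\ell_j:\tfrac{2\ell_j+1}{2m_j}<\alpha_j\right\}\right|.$$
Since each $\phi_j$ is a non-decreasing step function with jumps of size $1/m_j$ at the one-dimensional grid points, $|\Delta|$ attains its supremum as $\bsalpha$ approaches a vertex of the induced box-partition of $[0,1]^d$ from one of its sides. On the positive side the relevant quantity $\prod(2k_j)-\prod(2k_j-1)$ is non-decreasing in each $k_j$, so the supremum is achieved by approaching the corner $\alpha_j=(2m_j-1)/(2m_j)$ from above, giving $\Delta\to 1-\prod_j(1-1/(2m_j))$. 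The analogous quantity $\prod(2k_j+1)-\prod(2k_j)$ controls the negative side, yielding the candidate $-\Delta\to\prod_j(1-1/(2m_j))-\prod_j(1-1/m_j)$ obtained by approaching the same vertex from below. To finish I would verify the algebraic inequality
$$1+\prod_{j=1}^d\left(1-\frac{1}{m_j}\right)\ge 2\prod_{j=1}^d\left(1-\frac{1}{2m_j}\right),$$
which says the positive extremum dominates, by a one-line induction on $d$ using the implication $(2P-Q\le 1)\Rightarrow(2P(1-a)-Q(1-2a)\le 1)$ with $P=\prod(1-a_j)$, $Q=\prod(1-2a_j)$, $a_j=1/(2m_j)$.

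Formula \eqref{fo:wdisclat} is then immediate from the preceding lemma: $D_{N,\bsgamma}^*(\Gamma_{m_1,\ldots,m_d})=\max_\uu\gamma_\uu D_N^*(\Gamma_{m_1,\ldots,m_d}(\uu))$, and the projection $\Gamma_{m_1,\ldots,m_d}(\uu)$ is a multiset on the $|\uu|$-dimensional grid $\Gamma_{m_j:j\in\uu}$ in which every distinct point has the same multiplicity $\prod_{j\notin\uu}m_j$; its QMC discrepancy therefore coincides with $D^*(\Gamma_{m_j:j\in\uu})$ in dimension $|\uu|$, to which \eqref{fo:disc} applies. Estimate \eqref{bd:wdisclat} then follows by invoking the Bernoulli-type inequality $1-\prod_{j\in\uu}(1-a_j)\le\sum_{j\in\uu}a_j$ (a two-line induction) with $a_j=1/(2m_j)$.

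The only step beyond routine product-rule bookkeeping is the final comparison inside the proof of \eqref{fo:disc}: localising the two candidate extrema at the upper-right grid vertex is straightforward, but ruling out that the negative-side extremum is the global maximum relies on the inductive algebraic inequality displayed above.
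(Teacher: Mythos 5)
Your proposal is correct, and on two of the four claims it genuinely diverges from the paper. For \eqref{lbd_belw} the paper tests anchored boxes with $\alpha_\ell=1-\tfrac{c}{2m_\ell}$ near $1$ and runs a three-way case distinction on the total coefficient mass $A_{m_1,\ldots,m_d}=\sum_{\bsx}a_{\bsx}$, which is why it only extracts the constant $\tfrac{1}{4m_\ell}$; your choice $\alpha_\ell=\tfrac{1}{2m_\ell}$ makes the box $[0,\tfrac{1}{2m_\ell})$ miss every grid point (the half-open interval excludes the smallest coordinate value $\tfrac{1}{2m_\ell}$), so the coefficients drop out entirely and you get the cleaner and strictly stronger bound $\gamma_\ell/(2m_\ell)$ with no case analysis. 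For \eqref{fo:disc} the paper simply defers to the proof of \cite[Theorem~2.19]{LP14}; you reconstruct that argument from scratch, and your key comparison $1+\prod_j(1-\tfrac{1}{m_j})\ge 2\prod_j(1-\tfrac{1}{2m_j})$ does follow from the stated induction, since with $P=\prod(1-a_j)$, $Q=\prod(1-2a_j)$ one has $2P(1-a)-Q(1-2a)=(2P-Q)-2a(P-Q)\le 2P-Q$ because $P\ge Q\ge 0$ for $a_j\in[0,\tfrac12]$. The one detail worth adding there: on the negative side you must also dispose of cells where some counts equal $m_j$ (forcing $\alpha_j=1$ rather than $\tfrac{2k_j+1}{2m_j}$); these reduce to the projected grid on the remaining coordinates and are absorbed by the same inequality in lower dimension, so the sketch is sound but should say so. Your treatment of \eqref{fo:wdisclat} via the projection lemma \eqref{fo:wdisc1} (with the correct observation that the projected multiset has uniform multiplicities) and of \eqref{bd:wdisclat} via $1-\prod_{j\in\uu}(1-\varepsilon_j)\le\sum_{j\in\uu}\varepsilon_j$ coincides with the paper's.
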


\begin{proof}
We have 
\begin{align*}
D_{N,\bsgamma}^*(\Gamma_{m_1,\ldots,m_d},{\cal A}(\Gamma_{m_1,\ldots,m_d})) \ge & \sup_{\alpha_{\ell} \in[0,1]} \gamma_{\ell} |\Delta_{\Gamma_{m_1,\ldots,m_d},{\cal A}(\Gamma_{m_1,\ldots,m_d})}((1,\ldots,1,\alpha_{\ell},1,\ldots,1))|  \\
\ge & \gamma_{\ell} \max_{0 \le c \le 1} \left|A_{m_1,\ldots,m_d} -1+\frac{c}{2 m_{\ell}}\right|,
\end{align*}
where 
$A_{m_1,\ldots,m_d}=\sum_{\bsx\in \Gamma_{m_1,\ldots,m_d}}a_{\bsx}$ and $c \in [0,1]$. The last inequality can be easily seen by setting $\alpha_{\ell}=1-\frac{c}{2 m_{\ell}}$ for $c\in[0,1]$. 

If $$A_{m_1,\ldots,m_d} \ge 1-\frac{1}{4m_\ell} \ \ \mbox{or}\ \ A_{m_1,\ldots,m_d} \le 1-\frac{3}{4m_\ell},$$ then we have $$\left|A_{m_1,\ldots,m_d}-1+\frac{1}{2 m_\ell}\right| \ge \frac{1}{4 m_\ell}.$$ If $$1-\frac{3}{4m_\ell} \le A_{m_1,\ldots,m_d} \le 1-\frac{1}{4m_\ell},$$ then $$\left|A_{m_1,\ldots,m_d}-1\right| \ge \frac{1}{4 m_\ell}.$$ This implies \eqref{lbd_belw}.

For the proof of formula \eqref{fo:disc} just follow the proof of \cite[Theorem~2.19]{LP14}, which is a special case of the proposed formula. Formula \eqref{fo:wdisclat} follows from \eqref{fo:disc} and \eqref{fo:wdisc1}. The bound \eqref{bd:wdisclat} follows from \eqref{fo:wdisclat} and the fact that for $t \in \mathbb{N}$ and for $\varepsilon_j \in (0,1]$ we have 
\begin{equation*}
1-\prod_{j=1}^t (1-\varepsilon_j) \le \sum_{j=1}^t \varepsilon_j.
\end{equation*}
This can be shown by induction on~$t$. 
\end{proof}

\begin{definition}\rm
For $\varepsilon \in (0,1)$ and $d \in \mathbb{N}$ let 
\begin{eqnarray*}
N_{\bsgamma}(\varepsilon,d):=\min\{N\in \mathbb{N} \ & : &  \ N=m_1\cdots m_d \mbox{ and there exist coefficients ${\cal A}(\Gamma_{m_1,\ldots,m_d})$} \\
& &  \mbox{ such that }  D_{N,\bsgamma}^{\ast}(\Gamma_{m_1,\ldots,m_d},{\cal A}(\Gamma_{m_1,\ldots,m_d})) \le \varepsilon\}.
\end{eqnarray*}
\end{definition}
We are interested in the behavior of $N_{\bsgamma}(\varepsilon,d)$ for $\varepsilon \rightarrow 0$ and $d \rightarrow \infty$. This is the subject of tractability. An overview on the current state of the art of tractability theory can be found in the three volumes \cite{NW08,NW10,NW12}.

\begin{definition}[Tractability]\rm
We deal with the following notions:
\begin{itemize}
\item If there exist positive numbers $C,\varepsilon_0$ and $\tau$ such that $$N_{\bsgamma}(\varepsilon,d) \ge C(1+\tau)^d \ \ \mbox{for all $\varepsilon \le \varepsilon_0$ and infinitely many $d \in \NN$,}$$ then the $\bsgamma$-weighted star discrepancy of the centered regular grid suffers from the {\it curse of dimensionality}.

\item We say that the centered regular grid with different mesh-sizes achieves {\it weak tractability (WT)} for the $\bsgamma$-weighted star discrepancy, if 
\begin{equation}\label{defWT}
\lim_{\varepsilon^{-1}+d \rightarrow \infty}  \frac{\log N_{\bsgamma}(\varepsilon,d)}{\varepsilon^{-1}+d}=0.
\end{equation}
\item We say that the centered regular grid with different mesh-sizes achieves {\it uniform weak tractability (UWT)} for the $\bsgamma$-weighted star discrepancy, if 
\begin{equation}\label{defUWT}
\lim_{\varepsilon^{-1}+d \rightarrow \infty}  \frac{\log N_{\bsgamma}(\varepsilon,d)}{\varepsilon^{-t_1}+d^{t_2}}=0 \ \ \ \mbox{for all $t_1,t_2\in (0,1]$}.
\end{equation}
\item We say that the centered regular grid with different mesh-sizes achieves {\it quasi polynomial tractability (QPT)} for the $\bsgamma$-weighted star discrepancy, if there exist positive numbers $C$ and $t$, such that 
\begin{equation}\label{def:QPT}
N_{\bsgamma}(\varepsilon,d) \le C \exp(t(1+\log d)(1+\log \varepsilon^{-1}))\ \ \ \mbox{ for all $d \in \NN$ and for all $\varepsilon \in (0,1)$.}
\end{equation}
\item We say that the centered regular grid with different mesh-sizes achieves {\it polynomial tractability (PT)} for the $\bsgamma$-weighted star discrepancy, if there exist positive numbers $C, \tau$ and a non-negative $\sigma$, such that 
\begin{equation}\label{defPT}
N_{\bsgamma}(\varepsilon,d)\le C \varepsilon^{-\tau} d^{\sigma} \ \ \ \mbox{for all $d \in \NN$ and for all $\varepsilon \in (0,1)$.}
\end{equation}
If \eqref{defPT} holds with $\sigma=0$, then we speak about {\it strong polynomial tractability (SPT)}.
\end{itemize}
\end{definition}

In the next sections we characterize the weight sequences for which these notions of tractability can be achieved. The results for WT and UWT are presented in Theorem~\ref{thm:WTiff}, the results for QPT, PT and SPT in Theorem~\ref{thm:PTiff}.

\section{The results}\label{sec:res}

In the following we will be concerned only with product weights \eqref{def:prodweight}. Throughout we tacitly assume that the weight sequence is non-increasing and $\gamma_1 \le 1$, i.e. $1 \ge \gamma_1 \ge \gamma_2 \ge \gamma_3 \ge \ldots \ge 0.$

It is clear that in order to achieve positive results for tractability the weights $\gamma_j$ necessarily have to tend to zero. This is the essence of the following result: 
\begin{lemma}
Assume that $\gamma_j \ge c>0$ for all $j \in \NN$. Then for all $\varepsilon \in (0,1]$ and all $d \in \NN$ we have
\begin{equation}\label{LB:bdw}
N_{\bsgamma}(\varepsilon,d) \ge \left(\frac{c}{4\varepsilon}\right)^d .
\end{equation}
In particular, the $\bsgamma$-weighted star discrepancy of the centered regular grid suffers from the curse of dimensionality. In particular, this holds true for the classical star discrepancy.
\end{lemma}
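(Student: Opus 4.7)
The plan is to read off the conclusion almost directly from the component-wise lower bound \eqref{lbd_belw} already established in the proposition, since that bound holds for arbitrary coefficients and in particular for the ones achieving the infimum in the definition of $N_{\bsgamma}(\varepsilon,d)$.

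First I would fix $\varepsilon \in (0,1]$ and $d \in \NN$, and consider any admissible choice of mesh-sizes $m_1,\ldots,m_d$ and coefficients ${\cal A}(\Gamma_{m_1,\ldots,m_d})$ with $D_{N,\bsgamma}^{\ast}(\Gamma_{m_1,\ldots,m_d},{\cal A}(\Gamma_{m_1,\ldots,m_d})) \le \varepsilon$. Applying \eqref{lbd_belw} for each $\ell \in [d]$ yields
\[
\frac{\gamma_\ell}{4 m_\ell} \;\le\; D_{N,\bsgamma}^{\ast}(\Gamma_{m_1,\ldots,m_d},{\cal A}(\Gamma_{m_1,\ldots,m_d})) \;\le\; \varepsilon,
\]
so that $m_\ell \ge \gamma_\ell/(4\varepsilon) \ge c/(4\varepsilon)$, where the second inequality uses the hypothesis $\gamma_j \ge c$ for all $j$. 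Multiplying these $d$ bounds gives $N = m_1 \cdots m_d \ge (c/(4\varepsilon))^d$, which, taking the infimum over all admissible configurations, establishes \eqref{LB:bdw}.

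To derive the curse of dimensionality, I would simply pick a threshold $\varepsilon_0$ small enough that $c/(4\varepsilon) \ge 1 + \tau$ for some fixed $\tau > 0$ whenever $\varepsilon \le \varepsilon_0$; for instance $\varepsilon_0 = c/8$ gives $c/(4\varepsilon) \ge 2$ and hence $N_{\bsgamma}(\varepsilon,d) \ge 2^d$ for all $d$, matching the definition of curse of dimensionality with $C=1$ and $\tau=1$. The classical (unweighted) case corresponds to $\gamma_j = 1$, so $c=1$ is admissible and the statement applies.

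There is no substantive obstacle here: the work was done in establishing \eqref{lbd_belw}, and this lemma is essentially a corollary extracting the exponential-in-$d$ growth that the per-coordinate bound forces as soon as the weights fail to decay.
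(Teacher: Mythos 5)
Your proposal is correct and follows essentially the same route as the paper: both apply the per-coordinate lower bound \eqref{lbd_belw} together with $\gamma_\ell \ge c$ to force $m_\ell \ge c/(4\varepsilon)$ and then multiply over $\ell \in [d]$. The only (immaterial) difference is the threshold chosen for the curse-of-dimensionality remark ($\varepsilon_0 = c/8$ giving $2^d$ versus the paper's $\varepsilon \le c/6$ giving $(1.5)^d$).
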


\begin{proof}
According to \eqref{lbd_belw}, for every $j \in [d]$ we have $D_{N,\bsgamma}^{\ast}(\Gamma_{m_1,\ldots,m_d},{\cal A}(\Gamma_{m_1,\ldots,m_d})) \ge \tfrac{c}{4 m_j}$. Thus, if $D_{N,\bsgamma}^{\ast}(\Gamma_{m_1,\ldots,m_d}) \le \varepsilon$, then $m_j \ge \tfrac{c}{4\varepsilon}$ and hence $m_1\cdots m_d \ge \left(\frac{c}{4\varepsilon}\right)^d$. This implies \eqref{LB:bdw}. For example, for $\varepsilon \in (0,\tfrac{c}{6}]$ we have $N_{\bsgamma}(\varepsilon,d) \ge (1.5)^d$ for all $d \in \NN$ and this completes the easy proof. 
\end{proof}

The following theorem gives an exact characterization of weight sequences which lead to WT and UWT, respectively.

\begin{theorem}\label{thm:WTiff}
The centered regular grid with different mesh-sizes achieves 
\begin{enumerate}
\item WT for the $\bsgamma$-weighted star discrepancy if and only if 
\begin{equation}\label{WT:iffbed}
\lim_{j \rightarrow \infty} j \gamma_j=0.
\end{equation}
\item UWT for the $\bsgamma$-weighted star discrepancy if and only if 
\begin{equation}\label{UWT:iffbed}
\lim_{j\rightarrow \infty} j^n \gamma_j=0 \ \ \ \mbox{for all $n \in \NN$.}
\end{equation}
\end{enumerate}
\end{theorem}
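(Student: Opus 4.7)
The plan is to establish each ``iff'' by pairing a necessity argument based on the single-coordinate lower bound \eqref{lbd_belw} with a sufficiency argument that produces explicit mesh-sizes and analyses them via the upper bound \eqref{bd:wdisclat}.

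For necessity, I would argue by contradiction in both parts. Suppose $j\gamma_j \not\to 0$ (respectively $j^n \gamma_j \not\to 0$ for some fixed $n \in \NN$). Then there exist $c > 0$ and a subsequence $j_1 < j_2 < \cdots$ with $j_k \gamma_{j_k} \ge c$ (respectively $j_k^n \gamma_{j_k} \ge c$), and monotonicity of the weights extends this to $\gamma_\ell \ge c/j_k$ (respectively $\gamma_\ell \ge c/j_k^n$) for every $\ell \le j_k$. Taking $d = j_k$ and $\varepsilon = c/(16\,j_k)$ (respectively $\varepsilon = c/(16\,j_k^n)$), the bound \eqref{lbd_belw} forces $m_\ell \ge 4$ for every $\ell \le j_k$, whence $N_{\bsgamma}(\varepsilon,d) \ge 4^{j_k}$. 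A short computation then shows that the ratio defining WT in \eqref{defWT} (respectively UWT in \eqref{defUWT} evaluated at $t_1 = 1/n$, $t_2 = 1$) stays bounded below by a positive constant along the subsequence, contradicting the hypothesis.

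For sufficiency, the whole construction revolves around the constant
\[
M \,:=\, \sup_{k \ge 1} k\, \gamma_1 \gamma_2 \cdots \gamma_{k-1}
\]
(empty product equal to $1$). Since $(k+1)\gamma_1\cdots\gamma_k/(k\gamma_1\cdots\gamma_{k-1}) = (1 + 1/k)\gamma_k \to 0$ whenever $j\gamma_j \to 0$, the sequence $k\gamma_1 \cdots \gamma_{k-1}$ eventually decreases to $0$ and $M < \infty$ under the hypothesis of either part. Given $\varepsilon \in (0,1)$ and $d \in \NN$, I would set $m_j := \max(1, \lceil M\gamma_j/\varepsilon \rceil)$. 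The technical core is to verify via \eqref{bd:wdisclat} that this gives $D_{N,\bsgamma}^*(\Gamma_{m_1,\ldots,m_d}) \le \varepsilon$. For $\uu$ contained in the ``active set'' $A = \{j : m_j > 1\}$, this rests on the identity
\[
\gamma_\uu \sum_{j \in \uu} \frac{1}{\gamma_j} \,=\, \sum_{j \in \uu} \prod_{\substack{i \in \uu \\ i \ne j}} \gamma_i \,\le\, |\uu|\,\gamma_1 \gamma_2 \cdots \gamma_{|\uu|-1} \,\le\, M,
\]
combined with $1/(2m_j) \le \varepsilon/(2M\gamma_j)$ on $A$; for subsets $\uu$ meeting the complement of $A$, the estimate $\gamma_j \le \varepsilon/M$ off $A$ provides a small extra factor in $\gamma_\uu$ that absorbs the $1/(2m_j) = 1/2$ coming from each inactive coordinate.

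Once $D \le \varepsilon$ is secured, one has $\log N_\bsgamma(\varepsilon,d) \le \sum_{j \in A} \log(2M\gamma_j/\varepsilon)$, and the remaining task is to bound $|A|$ and to exploit the negativity of $\sum_{j \in A} \log \gamma_j$. For part (1), $j\gamma_j \to 0$ gives $\gamma_j \le \delta/j$ for $j \ge j_0(\delta)$, so $|A| \le \max(j_0, \delta M/\varepsilon)$; a Stirling-type manipulation of $\sum_{j \in A} \log\gamma_j$ cancels the $|A|\log|A|$ growth and yields $\log N_\bsgamma(\varepsilon,d) \le C M \delta/\varepsilon + C_\delta$ with $C$ absolute and $C_\delta$ depending only on $\delta$; letting $\delta$ be arbitrarily small then forces $\log N_\bsgamma(\varepsilon,d)/(\varepsilon^{-1}+d) \to 0$, which is WT. For part (2), the stronger hypothesis $j^n \gamma_j \to 0$ upgrades the cardinality bound to $|A| \le \max(j_0(\delta,n), (\delta M/\varepsilon)^{1/n})$ and the same Stirling computation gives $\log N_\bsgamma(\varepsilon,d) = O_n(\varepsilon^{-1/n})$; given any $t_1 > 0$, taking $n$ with $1/n < t_1$ forces $\log N_\bsgamma(\varepsilon,d)/\varepsilon^{-t_1} \to 0$, and a fortiori $\log N_\bsgamma(\varepsilon,d)/(\varepsilon^{-t_1}+d^{t_2}) \to 0$, which is UWT. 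The step I anticipate as the main obstacle is exactly the verification of $D \le \varepsilon$ above for subsets $\uu$ that mix active and inactive coordinates, where one must combine the product-weight identity on $\uu \cap A$ with a geometric-series-type estimate on $\uu \cap A^c$ to avoid losing a factor of $|\uu|$.
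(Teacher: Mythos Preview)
Your plan is correct and will go through, but it differs from the paper's argument in several interesting ways.

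\textbf{Necessity.} You argue by subsequence contradiction; the paper instead inserts the parametrisation $\varepsilon^{-1}=\eta d$ (respectively $\varepsilon^{-1}=(\eta d)^n$) into the lower bound \eqref{logN:lbd} and lets $\eta\to\infty$ after computing the limit. Both routes are elementary and equally short.

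\textbf{Sufficiency.} Here the approaches diverge more substantially.
\begin{itemize}
\item The paper, for WT, first fixes $C$ with $\gamma_j\le C/j$, takes $m_j=\lceil \tfrac{3C e^C}{4\varepsilon}\gamma_j\rceil$, and bounds $\sum_{j\in\uu}\prod_{k\ne j}\gamma_k$ by replacing each $\gamma_k$ with $C/k$ and summing $\tfrac{C^{|\uu|}}{|\uu|!}\cdot\tfrac{|\uu|(|\uu|+1)}{2C}\le \tfrac{3Ce^C}{2}$. Your constant $M=\sup_{k\ge 1}k\,\gamma_1\cdots\gamma_{k-1}$ is more intrinsic, and your inequality $\sum_{j\in\uu}\prod_{i\ne j}\gamma_i\le |\uu|\,\gamma_1\cdots\gamma_{|\uu|-1}\le M$ is both sharper and shorter than the paper's computation.
\item For UWT the paper switches to a \emph{different} mesh, $m_j=\lceil\sqrt{\gamma_j}/\delta\rceil$ with $\delta=2\varepsilon/S$ and $S=\sum_j\sqrt{\gamma_j}$, which gives $\log N_{\bsgamma}=O(\varepsilon^{-2/n})$ and therefore requires $2/n<t_1$. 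You keep the same mesh as in the WT case and squeeze out $\log N_{\bsgamma}=O(\varepsilon^{-1/n})$ via the factorial bound $(L^{1/n})^K/K!\le e^{nL^{1/n}}$; this is a genuine improvement in the intermediate estimate, though of course the final theorem is the same.
\end{itemize}

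\textbf{Your ``main obstacle'' is not one.} For $j\notin A$ you have $\gamma_j\le \varepsilon/M$, hence $\varepsilon/(2M\gamma_j)\ge \tfrac12=1/(2m_j)$; together with $1/(2m_j)\le \varepsilon/(2M\gamma_j)$ for $j\in A$, the inequality $1/(2m_j)\le \varepsilon/(2M\gamma_j)$ holds for \emph{every} $j$ with $\gamma_j>0$. Thus the chain
\[
\gamma_\uu\sum_{j\in\uu}\frac{1}{2m_j}\ \le\ \frac{\varepsilon}{2M}\sum_{j\in\uu}\prod_{i\in\uu,\,i\ne j}\gamma_i\ \le\ \frac{\varepsilon}{2}
\]
works uniformly for all $\emptyset\ne\uu\subseteq[d]$, with no need to split into active/inactive parts or to run a separate geometric-series estimate.
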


\begin{proof}
\begin{enumerate}
\item First we show the necessity of the condition: For every $j \in [d]$ we have $$D_{N,\bsgamma}^{\ast}(\Gamma_{m_1,\ldots,m_d},{\cal A}(\Gamma_{m_1,\ldots,m_d})) \ge \frac{\gamma_j}{4m_j}.$$ Thus, if $D_{N,\bsgamma}^{\ast}(\Gamma_{m_1,\ldots,m_d}) \le \varepsilon$, then $m_j \ge \frac{\gamma_j}{4 \varepsilon}$ and hence $m_1\cdots m_d \ge \left(\frac{1}{4 \varepsilon}\right)^d  \prod_{j=1}^d \gamma_j \ge  \left(\frac{\gamma_d}{4 \varepsilon}\right)^d$. Hence 
\begin{equation}\label{logN:lbd}
\log N_{\bsgamma}(\varepsilon,d) \ge d \log \left(\frac{\gamma_d}{4 \varepsilon}\right).
\end{equation}
Since we have WT, we have $\lim_{\varepsilon^{-1}+d \rightarrow \infty} \frac{\log N_{\bsgamma}(\varepsilon,d)}{\varepsilon^{-1}+d}=0.$ In particular, choosing $\varepsilon^{-1}=\eta d$ for arbitrary large $\eta>0$ and letting $d \rightarrow \infty$ we have $$0=\lim_{d \rightarrow \infty} \frac{d \log\left(\gamma_d d \frac{\eta}{4}\right)}{d(\eta+1)}=\lim_{d \rightarrow \infty} \frac{\log\left(\gamma_d d \frac{\eta}{4}\right)}{\eta+1}.$$ Hence $$\lim_{d \rightarrow \infty} d \gamma_d = \frac{4}{\eta}$$ and \eqref{WT:iffbed} follows, since $\eta$ can be arbitrary large.   

Now assume that \eqref{WT:iffbed} holds. Then there exists some $C>0$ such that $\gamma_j \le C j^{-1}$ for all $j \in \NN$. W.l.o.g. we may assume that $C \ge 1$.  Choose $\Gamma_{m_1,\ldots,m_d}$ with $$m_j=\left\lceil \frac{\gamma_j 3 C {\rm e}^C}{4 \varepsilon}\right\rceil \ \ \ \mbox{ for all $j \in [d]$},$$ and QMC coefficients. Then for $\emptyset \not= \uu \subseteq [d]$ we have 
$$\gamma_{\uu} \sum_{j \in \uu} \frac{1}{2 m_j} \le \gamma_{\uu} \sum_{j \in \uu} \frac{1}{2\, \frac{\gamma_j 3 C {\rm e}^C}{4 \varepsilon}}= \frac{2 \varepsilon}{3 C {\rm e}^C} \sum_{j\in \uu} \prod_{k \in \uu \atop k\not=j} \gamma_k.$$ 
Since the weight sequence is non-increasing it follows that 
\begin{align*}
\sum_{j\in \uu} \prod_{k \in \uu \atop k\not=j} \gamma_k \le & \sum_{j=1}^{|\uu|} \prod_{k=1 \atop k \not=j}^{|\uu|} \gamma_k \le \sum_{j=1}^{|\uu|} \prod_{k=1 \atop k \not=j}^{|\uu|} \frac{C}{k} = \left(\prod_{k=1}^{|\uu|} \frac{C}{k}\right) \sum_{j=1}^{|\uu|} \frac{j}{C} = \frac{C^{|\uu|}}{|\uu|!}  \frac{|\uu|(|\uu|+1)}{2 C}.
\end{align*}

If $|\uu|\ge 2$, then we have $$\frac{C^{|\uu|}}{|\uu|!}  \frac{|\uu|(|\uu|+1)}{2 C}\le \frac{3C}{2} \frac{C^{|\uu|-2}}{(|\uu|-2)!} \le \frac{3C {\rm e}^C}{2} .$$ If $|\uu|=1$, then $$\frac{C^{|\uu|}}{|\uu|!}  \frac{|\uu|(|\uu|+1)}{2 C}=1\le \frac{3C {\rm e}^C}{2}.$$ In any case, for $\emptyset \not= \uu \subseteq [d]$ we have 
$$\gamma_{\uu} \sum_{j \in \uu} \frac{1}{2 m_j} \le \varepsilon$$ and hence, according to \eqref{bd:wdisclat}, $$D_{N,\bsgamma}^*(\Gamma_{m_1,\ldots,m_d}) \le \varepsilon.$$ 

Obviously, $m_j=1$ if 
\begin{equation}\label{bedm1}
\frac{\gamma_j 3 C {\rm e}^C}{4 \varepsilon} \le 1 \ \Leftrightarrow \ \gamma_j \le \frac{4\varepsilon}{3 C {\rm e}^C}.
\end{equation}
Let now $\delta >0$ be arbitrary. According to \eqref{WT:iffbed} there exists a positive number $j_*=j_*(\delta)$ such that for all $j\ge j_*$ we have $$\gamma_j \le \frac{\delta}{j}.$$ Hence, for $j \ge j_*$ condition \eqref{bedm1} is certainly satisfied if $$\frac{\delta}{j} \le \frac{4\varepsilon}{3 C {\rm e}^C} \ \Leftrightarrow \ j \ge \frac{\delta 3 C {\rm e}^C}{4 \varepsilon}.$$  
Let $M:=3 C {\rm e}^C/(4\varepsilon)$ and $N=m_1\cdots m_d$. Then we have 
\begin{align*}
N \le & \prod_{j=1}^{j_* -1} \lceil \gamma_j M\rceil \prod_{j=j_*}^{ \lfloor \delta M\rfloor} \lceil \gamma_j M \rceil \le \lceil \gamma_1 M \rceil^{j_*}  \prod_{j=j_*}^{ \lfloor \delta M\rfloor} \left\lceil \frac{\delta M}{j} \right\rceil  \\
\le & \lceil \gamma_1 M \rceil^{j_*}  \prod_{j=1}^{ \lfloor \delta M\rfloor} \frac{2 \delta M}{j} =   \lceil \gamma_1 M \rceil^{j_*}   \frac{(2 \delta M)^{ \lfloor \delta M\rfloor}}{\lfloor \delta M\rfloor !}\\
\le & \lceil \gamma_1 M \rceil^{j_*}   {\rm e}^{2 \delta M}.
\end{align*} 
Hence $$\log N_{\bsgamma}(\varepsilon,d) \le \log N \le  j_* \log \lceil \gamma_1 M \rceil + 2 \delta M =  j_* \log \left\lceil \gamma_1 \frac{3 C {\rm e}^C}{4 \varepsilon} \right\rceil  + \frac{\delta 3 C {\rm e}^C}{2\varepsilon}$$  
and this implies $$\limsup_{\varepsilon^{-1}+d \rightarrow \infty} \frac{\log N_{\bsgamma}(\varepsilon,d)}{\varepsilon^{-1}+d}\le \delta \frac{3 C {\rm e}^C}{2}.$$ Since $\delta>0$ can be arbitrary small we obtain  $$\lim_{\varepsilon^{-1}+d \rightarrow \infty} \frac{\log N_{\bsgamma}(\varepsilon,d)}{\varepsilon^{-1}+d}=0$$ and thus the centered regular grid with different mesh-sizes achieves WT for the weighted star discrepancy.
\item Using \eqref{logN:lbd} and the assumption of UWT we obtain $\lim_{\varepsilon^{-1}+d \rightarrow \infty} \frac{d\log (\gamma_d/(4\varepsilon))}{\varepsilon^{-t_1}+d^{t_2}}=0$ for all $t_1,t_2 \in (0,1]$. In particular, choosing $t_1=1/n$ for $n \in \NN$ and $\varepsilon^{-1}=(\eta d)^n$ for arbitrary large $\eta>0$ and letting $d \rightarrow \infty$ we have $$0=\lim_{d \rightarrow \infty} \frac{d \log\left(\frac{\gamma_d}{4} (d \eta)^n\right)}{d \eta +d^{t_2}}=\lim_{d \rightarrow \infty} \frac{\log\left(\frac{\gamma_d}{4} (d \eta)^n\right)}{\eta+d^{t_2-1}}.$$ Hence $\lim_{d \rightarrow \infty} d^n \gamma_d = \frac{4}{\eta^n}.$ Since $\eta$ can be arbitrary large it follows that $$\lim_{d \rightarrow \infty} d^n \gamma_d = 0 \ \ \ \mbox{for all $n\in \NN$.}$$

Now assume that \eqref{UWT:iffbed} holds. Let $t_1,t_2 \in (0,1]$ and choose $n \in \NN$ such that $2/n < t_1$. Then there exists a $C>0$ such that $\gamma_j \le C/j^n$ for all $j \ge 1$. We take $\Gamma_{m_1,\ldots,m_d}$ with QMC coefficients where for $j \in [d]$ we choose $$m_j= \left \lceil \frac{\sqrt{\gamma_j}}{\delta}\right\rceil, \ \mbox{where $\delta:=\frac{2 \varepsilon}{S}$ and $S=\sum_{j \ge 1}\sqrt{\gamma_j} < \infty$.}$$ Then we have $D_{N,\bsgamma}^*(\Gamma_{m_1,\ldots,m_d}) \le \varepsilon$. Furthermore, $m_j=1$ if $\frac{\sqrt{\gamma_j}}{\delta} \le 1$ and this is certainly satisfied if $$\frac{\sqrt{C}}{j^{n/2} \delta} \le 1 \ \Leftrightarrow \ j \ge \left(\frac{C}{\delta^2}\right)^{1/n}. $$ With $j_*:=\lfloor \left(\frac{C}{\delta^2}\right)^{1/n} \rfloor$ and $N=m_1 m_2\cdots m_d$ we obtain
\begin{align*}
N & \le  \prod_{j=1}^{j_*}  \left \lceil \frac{\sqrt{\gamma_j}}{\delta}\right\rceil \le  \prod_{j=1}^{j_*}  \left \lceil \frac{\sqrt{C}}{j^{n/2}\delta}\right\rceil \le  \prod_{j=1}^{j_*}  \frac{2 \sqrt{C}}{j^{n/2}\delta} = \left(\frac{2 \sqrt{C}}{\delta}\right)^{j_*} \frac{1}{(j_*!)^{n/2}} \\
& = \left( \left( \left(\frac{2 \sqrt{C}}{\delta}\right)^{2/n}\right)^{j_*} \frac{1}{j_*!} \right)^{n/2} \le \left(\exp\left(\left(\frac{2 \sqrt{C}}{\delta}\right)^{2/n} \right)\right)^{n/2}.
\end{align*}
Hence
$$\log N_{\bsgamma}(\varepsilon,d) \le \frac{n}{2} \left(\frac{2 \sqrt{C}}{\delta}\right)^{2/n}=\frac{n}{2} \left(\frac{C}{S^2}\right)^{1/n} \varepsilon^{-2/n}.$$ Since $2/n<t_1$ we obtain $$\lim_{\varepsilon^{-1}+d \rightarrow \infty} \frac{\frac{n}{2} \left(\frac{C}{S^2}\right)^{1/n} \varepsilon^{-2/n}}{\varepsilon^{-t_1}+d^{t_2}}=0$$ and hence we achieve UWT. 
\end{enumerate}
\end{proof}

\begin{example}\rm
\begin{enumerate}
\item Let $\gamma_j=j^{-\alpha}$ for all $j \in\NN$. Then the centered regular grid with different mesh-sizes achieves WT for the weighted star discrepancy if and only if $\alpha>1$. It is not possible to achieve UWT, regardless of how big $\alpha$ is.
\item If $\gamma_j=\omega^{j^{\alpha}}$ with $\omega \in (0,1)$ and $\alpha>0$, then the centered regular grid with different mesh-sizes achieves UWT. One can even show that 
\begin{equation}\label{almostQPT}
N_{\bsgamma}(\varepsilon,d)\le \exp\left(\frac{1}{(\log 1/\sqrt{\omega})^{1/\alpha}} \left( \log \frac{S}{\varepsilon}\right)^{1+\frac{1}{\alpha}}\right),\ \ \ \mbox{where $S=\sum_{j \ge 1}\sqrt{\gamma_j}$.}
\end{equation}
\begin{proof}
For $j \in [d]$ choose $m_j=\lceil \sqrt{\gamma_j}/\delta \rceil,$ where $\delta:=\frac{2 \varepsilon}{S}$ and $S=\sum_{j \ge 1}\sqrt{\gamma_j} < \infty$ and QMC coefficients. Then we have $D_{N,\bsgamma}^*(\Gamma_{m_1,\ldots,m_d}) \le \varepsilon$. Furthermore, $m_j=1$ if $$\frac{\sqrt{\gamma_j}}{\delta} \le 1\ \Leftrightarrow \ j \ge \left(\log_x\frac{1}{\delta}\right)^{1/\alpha},$$ where $x=\omega^{-1/2}$ and $\log_x$ the logarithm to base $x$. Let $j_0(\varepsilon)=\lfloor \left( \log_x \delta^{-1}\right)^{1/\alpha}\rfloor.$ If we assume that $0 < \varepsilon < S \sqrt{\omega}/2$, then $\log_x \delta^{-1} \ge 1$ and hence $j_0(\varepsilon) \ge \frac{1}{2}( \log_x \delta^{-1})^{1/\alpha}$.

Now we have $$N =\prod_{j=1}^{j_0(\varepsilon)} m_j = \prod_{j=1}^{j_0(\varepsilon)} \left\lceil \frac{\sqrt{\gamma_j}}{\delta}\right\rceil \le \prod_{j=1}^{j_0(\varepsilon)} 2 \frac{\sqrt{\gamma_j}}{\delta},$$ since $\frac{\sqrt{\gamma_j}}{\delta} \ge 1$ for $j \in \{1,\ldots,j_0(\varepsilon)\}$. Hence $$N \le \left(\frac{2}{\delta}\right)^{j_0(\varepsilon)} (\sqrt{\omega})^{\sum_{j=1}^{j_0(\varepsilon)} j^{\alpha}} \le \left(\frac{2}{\delta}\right)^{j_0(\varepsilon)}$$ and therefore
\begin{equation*}
N_{\bsgamma}(\varepsilon,d)\le N \le  \left(\frac{S}{\varepsilon}\right)^{\frac{1}{(\log x)^{1/\alpha}} \left(\log \frac{S}{2 \varepsilon}\right)^{1/\alpha}}=\exp\left(\frac{1}{(\log x)^{1/\alpha}} \left( \log \frac{S}{\varepsilon}\right)^{1+\frac{1}{\alpha}}\right).
\end{equation*}
\end{proof}
\end{enumerate}
\end{example}

Note that the bound in \eqref{almostQPT} is very close to the definition of QPT in \eqref{def:QPT}. The only annoying term is the exponent $1+\frac{1}{\alpha}$. This leads to the question if also QPT is attainable for sufficiently fast decaying weights? However, the following result shows, that QPT, PT or SPT can only be achieved in the trivial case where the weight sequence $(\gamma_j)_{j \ge 1}$ becomes eventually zero.
\begin{theorem}\label{thm:PTiff}
The following assertions are equivalent:
\begin{enumerate}
\item the centered regular grid with different mesh-sizes achieves QPT for the $\bsgamma$-weighted star discrepancy;
\item the centered regular grid with different mesh-sizes achieves PT for the $\bsgamma$-weighted star discrepancy;
\item the centered regular grid with different mesh-sizes achieves SPT for the $\bsgamma$-weighted star discrepancy;
\item the weight sequence $(\gamma_j)_{j \ge 1}$ becomes eventually zero.
\end{enumerate}
\end{theorem}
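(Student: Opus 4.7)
The plan is to prove the cycle $(4) \Rightarrow (3) \Rightarrow (2) \Rightarrow (1) \Rightarrow (4)$. The middle implications $(3) \Rightarrow (2) \Rightarrow (1)$ are immediate from the definitions: SPT is the special case $\sigma = 0$ of PT, and any polynomial bound $C \varepsilon^{-\tau} d^\sigma$ is majorized by the expression in \eqref{def:QPT} for $t \ge \max(\tau,\sigma)$.

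For $(4) \Rightarrow (3)$, suppose $\gamma_j = 0$ for all $j \ge j_0$. Since the weights are of product form, $\gamma_{\uu} = 0$ whenever $\uu$ contains any index $\ge j_0$, so only subsets $\uu \subseteq [j_0 - 1]$ contribute to the maximum in \eqref{bd:wdisclat}. Taking $m_j = \lceil j_0/\varepsilon \rceil$ for $j < j_0$ and $m_j = 1$ for $j \ge j_0$ gives discrepancy at most $\varepsilon/2$ and a point count $N = \lceil j_0/\varepsilon \rceil^{j_0 - 1}$ independent of $d$, which is SPT.

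The substantive direction is $(1) \Rightarrow (4)$, which I argue by contradiction. Assume QPT holds but, by monotonicity, $\gamma_j > 0$ for all $j$. If $\inf_j \gamma_j > 0$ then the earlier curse-of-dimensionality lemma \eqref{LB:bdw} already contradicts QPT, so I may assume $\gamma_d \to 0$. The key step is then the coupled choice $\varepsilon := \gamma_d^{3/2}/4$ for each large $d$. By \eqref{lbd_belw} and the monotonicity $\gamma_j \ge \gamma_d$ for $j \le d$, every admissible grid must satisfy $m_j \ge \gamma_j/(4\varepsilon) \ge \gamma_d^{-1/2}$ for all $j \in [d]$, hence $N_{\bsgamma}(\varepsilon,d) \ge \gamma_d^{-d/2}$. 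Setting $L := \log(1/\gamma_d)$ and plugging into the QPT bound \eqref{def:QPT} yields an inequality of the form
$$\frac{d L}{2} \le \log C + t(1+\log d)\left(1 + \log 4 + \tfrac{3}{2}L\right),$$
and solving for $L$ forces $L = O((\log d)/d) \to 0$, contradicting $L \to \infty$.

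The main obstacle is identifying the correct calibration $\varepsilon \sim \gamma_d^{3/2}$. The scaling must simultaneously force $m_j \ge \gamma_d^{-1/2}$ (producing the lower bound $\exp(dL/2)$ on $N$) and keep $\log \varepsilon^{-1}$ only of order $L$, so that the $(1+\log d)(1+\log\varepsilon^{-1})$ ceiling in \eqref{def:QPT} becomes at most of order $(\log d)\cdot L$; the factor $d$ on the left then overpowers the factor $(1+\log d)$ on the right for large $d$. A looser coupling (e.g.\ $\varepsilon = \gamma_d/8$, which only gives $N \ge 2^d$) permits the QPT ceiling to absorb the lower bound and produces no contradiction; once the right calibration is in hand, the remaining arithmetic is routine.
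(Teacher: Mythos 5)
Your proof is correct, and its overall architecture (the cycle $(4)\Rightarrow(3)\Rightarrow(2)\Rightarrow(1)\Rightarrow(4)$, the sufficiency construction with $O(1)$ many nontrivial mesh-sizes of order $\varepsilon^{-1}$, and the reliance on \eqref{lbd_belw} to force $m_j\ge\gamma_j/(4\varepsilon)$) coincides with the paper's. The one place where you genuinely diverge is the key implication $(1)\Rightarrow(4)$. You run a proof by contradiction with the coupling $\varepsilon=\gamma_d^{3/2}/4$, which obliges you to split into the cases $\inf_j\gamma_j>0$ (handled by \eqref{LB:bdw}) and $\gamma_d\to 0$, and to tune the exponent $3/2$ so that the factor $d$ beats $(1+\log d)$. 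The paper instead argues directly: starting from the same lower bound $N_{\bsgamma}(\varepsilon,d)\ge(\gamma_d/(4\varepsilon))^d$ and \eqref{def:QPT}, it solves for $\gamma_d$ to get $\gamma_d\le 4C^{1/d}{\rm e}^{t(1+\log d)/d}\,\varepsilon^{1-\frac{t}{d}(1+\log d)}$, notes that the exponent of $\varepsilon$ is at least $1/2$ once $d$ is large, and then for such \emph{fixed} $d$ lets $\varepsilon\to 0$ to conclude $\gamma_d=0$ outright. This removes both the case distinction and the calibration issue you describe as the ``main obstacle'': by keeping $d$ fixed and exploiting the full range $\varepsilon\in(0,1)$, no coupling between $\varepsilon$ and $\gamma_d$ is needed, and one obtains the stronger, quantitative conclusion ($\gamma_d=0$ for every $d\ge\tau$) in one step rather than via a limiting contradiction. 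Your argument buys nothing extra here, but it is a valid alternative; if you keep it, you may wish to note explicitly that monotonicity guarantees $\gamma_d\to\inf_j\gamma_j$, which is what legitimizes your dichotomy.
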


\begin{proof}
As in the proof of Theorem~\ref{thm:WTiff} we have $$N_{\bsgamma}(\varepsilon,d) \ge \left(\frac{\gamma_d}{4 \varepsilon}\right)^d.$$ Assume that we have QPT, i.e., there are $C,t>0$ such that $$N_{\bsgamma}(\varepsilon,d) \le C \exp(t(1+\log d)(1+\log \varepsilon^{-1}))\ \ \ \mbox{ for all $d \in \NN$ and for all $\varepsilon \in (0,1)$.}$$ Then we have 
\begin{eqnarray*}
\gamma_d  \le 4 \varepsilon C^{1/d} {\rm e}^{\frac{t}{d}(1+\log d) (1+\log \varepsilon^{-1})}  = 4 C^{1/d} {\rm e}^{t \frac{1+\log d}{d}} {\rm e}^{\left(\tfrac{t}{d}(1+\log d) -1\right) \log \varepsilon^{-1}}.
\end{eqnarray*}
Since $\lim_{d \rightarrow \infty} (\tfrac{t}{d}(1+\log d) -1)=-1$ it follows that there is a $\tau>0$ such that $$\frac{t}{d}(1+\log d) -1 \le -\frac{1}{2}\ \ \ \mbox{ for all $d \ge \tau$.}$$ Hence for $d \ge \tau$ and for all $\varepsilon \in (0,1)$ we have $$\gamma_d \le 4 C^{1/d} {\rm e}^{t \frac{1+\log d}{d}} \sqrt{\varepsilon}.$$ Hence, $\gamma_d=0$ for all $d \ge \tau$ since $\varepsilon\in (0,1)$ can be chosen arbitrary small.

Now assume that $\gamma_j=0$ for all $j >\tau$. Choose $m_j=\lceil\tau/(2\varepsilon)\rceil$ for $j=1,2,\ldots,\tau$ and $m_j=1$ for $j > \tau$ and QMC coefficients. Then, for $d > \tau$ we have $$D_{N,\bsgamma}^*(\Gamma_{m_1,\ldots,m_d})\le \varepsilon\ \ \mbox{ and }\ \ N_{\bsgamma}(\varepsilon,d)\le N=m_1 m_2 \cdots m_d =\left\lceil\frac{\tau}{2\varepsilon}\right\rceil^{\tau}.$$ Thus, the centered regular grid with different mesh-sizes achieves SPT for the $\bsgamma$-weighted star discrepancy. Of course, SPT implies PT which in turn implies QPT.
\end{proof}

\paragraph{Literature review.}
There are many existence results and constructions of point sets which achieve tractability for the $\bsgamma$-weighted star discrepancy. First we mention that even the unweighted star discrepancy is polynomially tractable; this is a seminal result by Heinrich, Novak, Wasilkowski and Wo\'{z}niakowski \cite{HNWW} which opened a new stream of research in discrepancy theory. However, a concrete point set for which PT is achieved is still unknown. In the weighted case, there is a component-by-component construction of polynomial lattice points which yield SPT of the weighted star discrepancy whenever the weight sequence is summable; see \cite{DLP}. An existence result of point sets achieving SPT for $D_{N,\bsgamma}^{\ast}$ under the very mild condition $\sum_{j \ge 1} {\rm e}^{-c \gamma_j^{-2}} < \infty$ for some $c>0$ is shown in \cite{Ai}. 
In \cite{DGPW,DP} explicit point sets that achieve PT for  $D_{N,\bsgamma}^{\ast}$ are given. For a more extended literature review see \cite[Section~1]{DP} and for further background on tractability issues of (weighted) discrepancy see \cite{NW08,NW10}.

\paragraph{Acknowledgement.}
A former version of this manuscript only dealt with the QMC case. I thank Henryk Wo\'{z}niakowski for making me aware of a possible extension of the results to the setting with arbitrary coefficients.

\begin{small}
\noindent\textbf{Authors' address:} Friedrich Pillichshammer, Institut f\"{u}r Finanzmathematik und Angewandte Zahlentheorie, Johannes Kepler Universit\"{a}t Linz, Altenbergerstr.~69, 4040 Linz, Austria.\\
\textbf{E-mail:} \texttt{friedrich.pillichshammer@jku.at}
\end{small}

\end{document}